\title{\vskip-1.0em \sc A twisted inclusion between tensor products of operator spaces}
\author{\sc Yemon Choi}
\def\update{20th June 2016; updated 20th May 2020}
\date{\update}
\newcommand{\comment}[1]{{\sf\textbf{\textcolor{Blue}{[\small#1]}}}}
\renewcommand{\comment}[1]{}
\renewcommand{\emph}[1]{{\sl#1}\/}
\newcommand{\defeq}{\mathbin{:=}}
\newcommand{\dt}[1]{{\itshape#1}\/}
\newcommand{\para}[1]{\paragraph{#1.}} 
\newcommand{\norm}[1]{\Vert{#1}\Vert}
\newcommand{\Norm}[1]{\bigl\Vert{#1}\bigr\Vert}
\newcommand{\basetp}{{\otimes}}
\newcommand{\tp}{\mathbin{\basetp}}
\newcommand{\ptp}{\mathbin{\widehat{\basetp}}}
\newcommand{\mintp}{\mathbin{\basetp}_{\rm min}}
\newcommand{\htp}{\mathbin{\basetp}_h}
\newcommand{\ip}[2]{\langle#1,#2\rangle}
\newcommand{\COL}{\textsf{COL}}
\newcommand{\Kbar}{\overline{K}}
\newcommand{\cO}{{\mathcal O}}
\newcommand{\mbb}[1]{{\mathbb#1}}
\newcommand{\Cplx}{\mbb{C}}
\newcommand{\Mat}{\mbb{M}}
\newcommand{\CB}{\operatorname{\it CB}} 
\newcommand{\pcat}[1]{\textbf{\textsf{#1}}}
\newcommand{\OpSp}{\pcat{OpSp}}
\newcommand{\til}[1]{\widetilde{#1}}
\newcommand{\longopp}[1]{{#1}^{\sim}} 
\newcommand{\Cst}{\operatorname{C}^*}
\newenvironment{YCnum}{%
\begin{enumerate}

}{\end{enumerate}\ignorespacesafterend}
\newcounter{pulse}[section]
\numberwithin{pulse}{section}  
\newcommand{\thf}{\sc} 
\theoremstyle{plain}
\newtheorem{thm}[pulse]{\thf Theorem}
\newtheorem{prop}[pulse]{\thf Proposition}
\newtheorem{lem}[pulse]{\thf Lemma}
\theoremstyle{definition}
\theoremstyle{remark}
\newtheorem{rem}[pulse]{\thf Remark}
\begin{document}

\maketitle
\section*{Author comments added 20th May 2020}

{\sf
The original version of this document had several small mis-statements, all arising from an incorrect definition of the transpose operator (due to my careless blurring of the distinction between a Hilbert space and its complex conjugate). These have now been corrected, as were a few minor typos.

In the years since this note was originally written, I have decided that the application of the main result to the construction of $2$-cocycles on Fourier algebras deserves to be written up separately, in fuller detail.
Moreover, reflection during the last few years leads me to believe that Theorem~\ref{t:mainthm} should be part of a bigger picture that deserves a more thorough treatement, where the key interpolation trick is done at the level of operator spaces and not just at the level of Banach spaces.

Therefore I have removed the material on cocycles, and am leaving this note here as a historical record. \textbf{Currently I do not intend to submit this note for publication}, although special cases of some of the remarks and results will be used in the forthcoming paper on $2$-cocycles.

\hfill --- Y. Choi, Lancaster, 2020/5/20
}

\bigskip\hrule\newpage
\begin{abstract}
Given operator spaces $V$ and $W$, let $\til{W}$ denote the opposite operator space structure on the same underlying Banach space. Although the identity map $W\to \til{W}$ is in general not completely bounded, we show that the identity map on $V\tp W$ extends to a contractive linear map $V\ptp W \to V\mintp \til{W}$,
where $\ptp$ and $\mintp$ denote the projective and injective tensor products of operator spaces. In future work, this will be applied to construct antisymmetric $2$-cocycles on certain Fourier algebras.


\bigskip
\noindent\textit{Dedicated to John Rainwater, with thanks for his varied contributions and generous spirit.}
\end{abstract}

\begin{section}{Introduction}
We start by emphasising a convention that will be adhered to throughout this note: \emph{all operator spaces are assumed, as part of the definition, to be complete.}

A core result in the theory of operator spaces is the following observation: the operation on $K(\ell_2)$ given by matrix transpose, $a \mapsto a^\top$,  fails to be completely bounded, even though it is an isometric involution of Banach spaces. This fact serves to explain certain phenomena in non-commutative harmonic analysis, and can be exploited to prove structural results about Fourier and Fourier-Stieltjes algebras of locally compact groups: see, for instance, \cite{ForRun,RunUyg}.

More generally: given an operator space $W$ --- by which we mean a complex vector space~$W$, equipped with a sequence of \emph{complete} norms $\norm{\cdot}_{(n)}$ on $M_n(W) = \Mat_n \tp W$ that satisfy Ruan's axioms --- one may define a new sequence of norms as follows:
\[ \norm{ \sum_i a_i \tp w_i}_{(n), {\rm opp}} \defeq \norm{ \sum_i a_i^\top \tp w_i}_{(n)} \qquad(a_i\in \Mat_n, w_i\in W).\]
These matrix norms also define an operator space structure on $W$, which we denote by $\til{W}$ and call\footnotemark\ the \dt{opposite operator space} of $W$, or the \dt{opposite operator space structure} on the underlying Banach space of $W$.
\footnotetext{A more natural name might be the \dt{transposed operator space structure}, but the terminology here appears to be the standard one.}
 While $W$ and $\til{W}$ have the same underlying Banach space, in general they are not isomorphic as operator spaces; in particular, the identity map on $W$ is not completely bounded as a map $W\to\til{W}$.

All this is well known. However, the following result appears to be new. We write $\ptp$ for the projective tensor product of operator spaces and $\mintp$ for the injective tensor product (also known as the \dt{minimal} tensor product).

\begin{thm}[Main theorem]\label{t:mainthm}
Let $V$ and $W$ be operator spaces. The identity map on their algebraic tensor product $V\tp W$ extends to a linear contraction
\[ \Psi_{V,W}: V\ptp W \to V\mintp  \til{W}. \]
\end{thm}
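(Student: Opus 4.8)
The plan is to use the standard universal properties of the projective and injective operator space tensor products, reducing everything to a statement about bilinear maps. Recall that $V \ptp \til W$ has the universal property that completely contractive bilinear maps out of $V \times \til W$ correspond to complete contractions out of $V \ptp \til W$; and that $V \mintp W$ sits completely isometrically inside $\CB(V^*, W)$ (or, dually, inside $B(\cH_1) \mintp B(\cH_2)$ for any completely isometric embeddings $V \hookrightarrow B(\cH_1)$, $W \hookrightarrow B(\cH_2)$). Since a complete contraction is in particular a contraction, it suffices to exhibit the identity on $V \tp W$ as a \emph{completely contractive bilinear map} $V \times \til W \to V \mintp W$; the induced map $V \ptp \til W \to V \mintp W$ is then automatically a complete contraction, hence a contraction, and it plainly restricts to the identity on the algebraic tensor product. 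So the whole theorem collapses to checking one bilinearity-with-bounds statement.

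To check that, I would pass to concrete realisations. Fix complete isometries $V \hookrightarrow B(\cH)$ and $W \hookrightarrow B(\cK)$; then $\til W \hookrightarrow B(\cK)^{\rm op} = B(\bar\cK)$ completely isometrically, where $\bar\cK$ is the conjugate Hilbert space, using the familiar fact that the opposite operator space structure is implemented by the transpose/conjugation. Meanwhile $V \mintp W \hookrightarrow B(\cH \otimes \cK)$ completely isometrically. So the required bilinear map is
\[ \theta : B(\cH) \times B(\bar\cK) \longrightarrow B(\cH\otimes\cK), \qquad \theta(x,y) = x \otimes y^\top, \]
restricted to $V \times \til W$, where $y \mapsto y^\top$ is the transpose identification $B(\bar\cK) \to B(\cK)$. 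The point is that although matrix transpose is \emph{not} completely bounded as a map $B(\cK) \to B(\cK)$, it \emph{is} a complete isometry as a map $B(\cK) \to B(\bar\cK)$ — that is precisely what the opposite operator space encodes — so $\theta$ is, up to these identifications, literally the map $x \otimes y \mapsto x \otimes y$, $B(\cH) \times B(\cK) \to B(\cH \otimes \cK)$. That canonical map is a complete contraction (indeed a complete isometry onto its closed span), being the bilinear map underlying the minimal tensor product. Unwinding the identifications, $\theta$ is completely contractive, and hence so is its restriction to $V \times \til W$.

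The step I expect to be the genuine content, rather than bookkeeping, is the claim that $\til W$ embeds completely isometrically into $B(\bar\cK)$ via the transpose — or equivalently, that the matrix norms on $\til W$ as defined via the formula in the introduction coincide with those inherited from $B(\bar\cK)$. One can see this at the level of $B(\cK)$ itself: conjugation $B(\cK) \to B(\bar\cK)$, $a \mapsto \bar a$, together with the canonical anti-isomorphism, realises $B(\cK)^{\rm op}$ on $\bar\cK$, and on matrix levels $M_n(B(\cK)^{\rm op}) = M_n(B(\cK))^{\rm op}$ up to the transpose on the $M_n$ factor, which is exactly the twist built into the definition of the opposite operator space norms $\norm{\cdot}_{(n),{\rm opp}}$. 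Granting this identification, everything else is the formal machinery of tensor-product universal properties recalled above; the remaining care needed is only in tracking which tensor leg the transpose acts on, and in confirming that the map one finally produces is the identity on elementary tensors, which it manifestly is.
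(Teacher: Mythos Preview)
Your argument has a genuine gap, and in fact the overall strategy cannot work. You are trying to show that the bilinear map $(v,w)\mapsto v\otimes w$ is \emph{completely} contractive from $V\times\til W$ to $V\mintp W$; but the paper observes immediately after the statement of the theorem that $\Psi_{V,W}$ is in general not even completely bounded (take $V=\Cplx$, so that $\Psi_{\Cplx,W}$ is the identity map $\til W\to W$, which fails to be c.b.\ whenever $W$ and $\til W$ are not completely isomorphic). So any argument that concludes with a complete contraction must be wrong somewhere.

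The error is in the sentence ``$\theta$ is, up to these identifications, literally the map $x\otimes y\mapsto x\otimes y$''. You have correctly embedded $\til W$ completely isometrically into $B(\cK)$ (with its standard o.s.s.) via $w\mapsto j_W(w)^\top$. But then the bilinear map you need, written in terms of the ambient $B(\cH)\times B(\cK)$, is $(x,y)\mapsto x\otimes y^\top$, \emph{not} $(x,y)\mapsto x\otimes y$. These differ precisely by a transpose on the second leg, and that transpose is not completely bounded as a map $B(\cK)\to B(\cK)$. Concretely, with $\cH=\Cplx$ your $\theta$ collapses to $y\mapsto y^\top$ on $B(\cK)$, which is the prototypical non-c.b.\ map. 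The identification that \emph{is} a complete isometry is $\top:\longopp{B(\cK)}\to B(\cK)$; but once you use it to land in $B(\cK)$ with standard o.s.s., you have already spent the ``opposite'' and cannot undo it for free.

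The paper's proof is genuinely different and does not attempt to stay inside $\OpSp$. It first treats the case $V=B(H)$, $W=B(K)$ by realising $B(H)\mintp B(K)$ inside $B(S_2(K,H))$ via $(a\otimes b)(c)=acb^\top$, and then showing that the ``untransposed'' action $(a\otimes b)(c)=acb$ extends contractively to $B(H)\ptp B(K)$. This is done by checking that the actions on $S_1(K,H)$ and on $S_\infty(K,H)$ are completely contractive (using column/row structure and the interchange lemma), and then applying Riesz--Thorin interpolation to reach $S_2$. The interpolation step is what produces a mere Banach-space contraction rather than a c.b.\ map, and that loss is essential.
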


Note that in general, $\Psi_{V,W}$ cannot be completely bounded (just take $V=\Cplx$), and so it seems hard to obtain a more direct proof by considering operator-space tensor norms on $V\tp W$.
We also note that in this theorem, we cannot replace the projective tensor product by the Haagerup tensor product, even if we weaken ``linear contraction'' to ``bounded linear map'' (Proposition~\ref{p:not-htp}).

Theorem~\ref{t:mainthm} was originally motivated by a technical issue that arose in studying the Hoch\-schild cohomology of Fourier algebras, specifically the problem of \dt{higher-dimensional weak amenability} as defined in \cite{BEJ_ndimWA}.
The main issue is that when $A$ and $B$ are cb-versions of Banach algebras, and $V$ and $W$ are cb-modules over $A$ and $B$ respectively, then $V\ptp\til{W}$ might not be a Banach module over $A\ptp B$; however, $V\mintp W$ \emph{is} such a module, and then Theorem~\ref{t:mainthm} allows us to replace the bad space $V\ptp\til{W}$ with the better space $V\mintp W$.
%
%
Details of this construction will be given in forthcoming work.

\para{Remarks on notation}
What we have written here as $\til{W}$ is often denoted in the literature by $W^{\rm op}$. We have chosen different notation because in some of the intended applications, one is dealing with an operator space $A$ which is also an algebra; and hence there is a potential conflict with the usage of $A^{\rm op}$ to denote the ``opposite algebra'', i.e. the algebra with the same underlying vector space but with reversed product.
 In longer expressions, when considering the opposite operator space, we use the notation $\longopp{(\dots)}$; for instance $\longopp{B(H)}$ denotes $B(H)$ equipped with the opposite of its usual operator space structure.

\end{section}

\begin{section}{Conventions and technical preliminaries}
We assume the reader is familiar with the basic definitions of operator spaces and completely bounded maps, as presented in \cite{ER_OSbook} or \cite{Pis_OSbook}. We remind her that, given operator spaces $E$ and $F$, the space $\CB(E,F)$ is itself an operator space in a natural way. If  $V$ is an operator space,  then the dual Banach space $V^*$ becomes an operator space under the identification $V^*=\CB(V,\Cplx)$, while $\CB(\Cplx,V)$ is completely isometrically isomorphic to~$V$.

Although we will not use any category theory, it is convenient occasionally to refer to the category of operator spaces and completely bounded maps, which we denote by \OpSp.

We shall abbreviate the phrase ``operator space structure'' to o.s.s.
Whenever $H$ is a Hilbert space and we refer to $B(H)$ as an operator space, we assume (unless explicitly stated otherwise) that it is equipped with its usual, canonical o.s.s.; note that if we do this, then there is a natural and completely isometric identification of $B(H)$ with $\CB(\COL_H)$, where $\COL_H$ denotes $H$ equipped with the column o.s.s.

\para{Opposite operator spaces, adjoints and transposes}
The \dt{opposite o.s.s.} was already defined in the introduction; see also \cite[\S2.10]{Pis_OSbook}. Let us collect some basic properties that do not seem to be mentioned in \cite{ER_OSbook} or \cite{Pis_OSbook}. It is easily checked that if $f:X \to Y$ is completely bounded, then so is $f: \til{X} \to \til{Y}$, with the same cb-norm. For sake of clarity, and to emphasise the functorial behaviour, we write this as $\til{f}: \til{X}\to\til{Y}$.
The same calculation gives, with some book-keeping, a more precise result: we omit the details.

\begin{lem}\label{l:functorial-opp}
Given operator spaces $E$ and $F$, we have a completely isometric isomorphism $\longopp{\CB(E,F)} \cong_1 \CB(\til{E},\til{F})$. In particular, we can identify $\longopp{(E^*)}$ with $(\til{E})^*$.
\end{lem}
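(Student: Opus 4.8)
The plan is to prove that the tautological bijection $\CB(E,F)\to\CB(\til E,\til F)$ --- sending a completely bounded map $f\colon E\to F$ to the same linear map, now regarded as a (completely bounded, by the remark preceding the lemma) map $\til E\to\til F$ --- is a \emph{complete} isometry from $\longopp{\CB(E,F)}$ onto $\CB(\til E,\til F)$. To get at the matrix norms of $\longopp{\CB(E,F)}$ I would invoke the defining completely isometric identification $M_n(\CB(E,F))\cong_1\CB(E,M_n(F))$, under which $[\phi_{ij}]$ corresponds to the map $T_\phi\colon x\mapsto[\phi_{ij}(x)]$. Unwinding the definition of the opposite o.s.s., $\norm{[\phi_{ij}]}_{M_n(\longopp{\CB(E,F)})}=\norm{[\phi_{ji}]}_{M_n(\CB(E,F))}$, which under the same identification is the cb-norm of $T_\phi^\dagger\colon E\to M_n(F)$, where $T_\phi^\dagger(x):=T_\phi(x)^\top$ (transpose inside $M_n$); whereas $\norm{[\til{\phi_{ij}}]}_{M_n(\CB(\til E,\til F))}$ is the cb-norm of $T_\phi$ viewed as a map $\til E\to M_n(\til F)$. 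Hence the lemma reduces to the single identity
\[ \norm{T\colon\til E\to M_n(\til F)}_{\rm cb}\;=\;\norm{T^\dagger\colon E\to M_n(F)}_{\rm cb}, \qquad T^\dagger(x):=T(x)^\top, \]
valid for every $n$ and every linear map $T\colon E\to M_n(F)$.

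I would then prove this identity by comparing the two amplified maps at each level $m$. The combinatorial heart of the matter is that, under the canonical identification $M_m(M_n(F))=M_{mn}(F)$, the full $mn\times mn$ transpose decomposes as the $m\times m$ block-transpose followed by the entrywise $M_n$-transpose; consequently, for $x\in M_m(E)$ one has $\bigl(T^{(m)}(x^\top)\bigr)^\top=(T^\dagger)^{(m)}(x)$, where the inner $\top$ is the $M_m$-transpose of the $E$-valued matrix $x$ and the outer one is the full $M_{mn}$-transpose: the block-transpose part cancels the domain transpose $x\mapsto x^\top$, and what survives is exactly the $M_n$-transpose that converts $T$ into $T^\dagger$. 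Feeding this into the descriptions $M_m(\til E)=(M_m(E),\norm{\cdot}_{(m),{\rm opp}})$ and $M_m(M_n(\til F))=M_{mn}(\til F)=(M_{mn}(F),\norm{\cdot}_{(mn),{\rm opp}})$, and letting $x$ run over the unit ball of $M_m(\til E)$ (equivalently, letting $x^\top$ run over the unit ball of $M_m(E)$), one reads off $\norm{T^{(m)}\colon M_m(\til E)\to M_m(M_n(\til F))}=\norm{(T^\dagger)^{(m)}\colon M_m(E)\to M_{mn}(F)}$; taking the supremum over $m$ gives the identity, and hence the lemma.

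I expect the only real obstacle to be keeping this ``partial transpose'' bookkeeping straight --- in particular, resisting the false shortcut $M_n(\til F)=\longopp{(M_n(F))}$ (this already fails for $F=\Cplx$ and $n\ge 2$, since the transpose map on $M_n$ is not completely isometric), and instead remembering that $M_n(\til F)$, amplified to level $m$, carries the \emph{full} $M_{mn}$-transpose. With the indices arranged correctly the computation is mechanical; this is the ``book-keeping'' referred to above. Finally, the ``in particular'' clause is the case $F=\Cplx$: transposing a single matrix preserves its operator norm, so $\til\Cplx=\Cplx$ and thus $\CB(\til E,\til\Cplx)=(\til E)^*$, while $\longopp{\CB(E,\Cplx)}=\longopp{(E^*)}$, and the completely isometric identification of the lemma specialises to $\longopp{(E^*)}\cong_1(\til E)^*$.
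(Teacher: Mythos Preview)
Your proposal is correct. The paper actually omits the proof entirely, saying only that ``the same calculation gives, with some book-keeping, a more precise result: we omit the details''; your argument supplies exactly that book-keeping, via the identification $M_n(\CB(E,F))\cong_1\CB(E,M_n(F))$ and the partial-transpose identity $(T^{(m)}(x^\top))^\top=(T^\dagger)^{(m)}(x)$, which is the natural matricial refinement of the scalar case ($f^{(n)}(x^\top)=(f^{(n)}(x))^\top$) that the paper has just used for the preceding sentence. Your explicit warning that $M_n(\til F)\neq\longopp{(M_n(F))}$ is a genuinely useful caveat and pinpoints where a careless attempt would go wrong.
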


Let $K$ be a Hilbert space. If $b\in B(K)$ then we denote its Banach space adjoint by $b^\#: K^*\to K^*$.
It is usual to identify $K^*$ with the conjugate Hilbert space $K$; this has the same underlying set as $K$, but
 scalar multiplication is now defined by $\lambda\cdot\xi= \overline{\lambda}\xi$.
The inner product on $\Kbar$ is defined by
\[ \ip{\xi}{\eta}_{\Kbar} \defeq \ip{\eta}{\xi}_H \]
When we make this identfication, $b^\#$ is identified with the operator $b^\top\in B(\Kbar)$ that satisfies
\[ \ip{b^\top\xi}{\eta}_{\Kbar}
= \ip{b\eta}{\xi}_K
= \ip{\xi}{b\eta}_{\Kbar} \]
This defines the \dt{transpose operator} $\top:B(K) \to B(\Kbar)$ (note that this is linear, not conjugate-linear).
From the remarks above (or by direct calculation) we see that $\top$ is a complete isometry from $B(K)$ onto $\longopp{B(\Kbar)}$, and
hence from $\longopp{B(K)}$ onto $B(\Kbar)$.

\para{Tensor products and tensor norms}
For clarity, we repeat some notation. The algebraic tensor product of two complex vector spaces $E$ and $F$ is denoted by $E\tp F$. 
The projective and injective tensor products in the category \OpSp\ are denoted by $\ptp$ and $\mintp$ respectively; this follows the notation of \cite{Pis_OSbook}, rather than that of \cite{ER_OSbook}.
If $f\in \CB(E,X)$ and $g\in \CB(F,Y)$ then by tensoring we obtain completely bounded maps $E\ptp F\to X\ptp Y$ and $E\mintp F\to X\mintp Y$; for extra emphasis, these maps will be denoted by $f\ptp g$ and $f\mintp g$ respectively. We have $\norm{f\ptp g}_{cb}=\norm{f\mintp g}_{cb}=\norm{f}_{cb}\norm{g}_{cb}$.

In proving Theorem~\ref{t:mainthm}, we exploit the fact that the injective tensor norm on $B(H)\tp B(K)$ can be calculated in terms of the action of this algebra on $S_2(\Kbar,H)$, the space of Hilbert-Schmidt operators $\Kbar\to H$.
It is slightly more convenient to replace $K$ by $\Kbar$, and to state this well-known result in a slightly different form from the usual one in the literature.

\begin{lem}\label{l:HS-trick}\
\begin{YCnum}
\item
The linear map $\theta_0: B(H) \tp B(\overline{K}) \to B(S_2(K,H))$ that is defined by
$\theta_0( a\tp b^\top) (c) \defeq acb$
extends to an injective $*$-homomorphism $\theta: B(H)\mintp B(\Kbar) \to B(S_2(K,H))$.

\item
There is a complete isometry $\Lambda: B(H) \mintp \longopp{B(K)} \to B(S_2(K,H))$, which satisfies $\Lambda(a\tp b)(c)=acb$ for all $a\in B(H)$, $b\in B(K)$ and $c\in S_2(K,H))$. 
\end{YCnum}
\end{lem}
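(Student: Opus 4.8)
The plan is to prove (i) by producing a single unitary $u\colon S_2(K,H)\to H\otimes_2 K$ (the Hilbert space tensor product) that conjugates $\theta_0$ onto the canonical copy of $B(H)\tp B(K)$ inside $B(H\otimes_2 K)$; part (ii) will then follow from (i) almost formally, by precomposing with the transpose.

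For (i), fix an orthonormal basis $(e_i)_{i\in I}$ of $K$ and define $u(c)\defeq\sum_i (ce_i)\tp e_i$ for $c\in S_2(K,H)$. The series converges, with $\norm{u(c)}^2=\sum_i\norm{ce_i}^2=\norm{c}_{S_2}^2$; this shows $u$ is an isometry, and since its range contains every $\xi\tp e_i$ ($\xi\in H$) it is onto, hence a unitary $S_2(K,H)\to H\otimes_2 K$. Expanding $u(acb^\top)$ over the basis, reindexing, and using that the matrix of $b^\top$ is the transpose of that of $b$, one finds
\[ u(acb^\top)=\sum_i (acb^\top e_i)\tp e_i=\sum_j (ace_j)\tp (be_j)=(a\tp b)\,u(c) \]
for all $a\in B(H)$, $b\in B(K)$ and $c\in S_2(K,H)$; that is, $\theta_0(a\tp b)=u^*(a\tp b)u$. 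Consequently $\theta_0$ is the composite of the canonical $*$-homomorphism $B(H)\tp B(K)\to B(H\otimes_2 K)$ with the $*$-isomorphism $B(H\otimes_2 K)\to B(S_2(K,H))$, $T\mapsto u^*Tu$. The former extends to the canonical complete isometry $B(H)\mintp B(K)\hookrightarrow B(H\otimes_2 K)$ --- this is, in effect, the definition of the minimal tensor product --- and the latter, being a $*$-isomorphism of C$^*$-algebras, is completely isometric; their composite is the desired complete isometry $\theta$.

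For (ii), recall from the preliminaries that $\top$ is a complete isometry from $\longopp{B(K)}$ onto $B(K)$, with inverse the complete isometry $\top\colon B(K)\to\longopp{B(K)}$. By the identity $\norm{f\mintp g}_{cb}=\norm{f}_{cb}\norm{g}_{cb}$, both $\operatorname{id}_{B(H)}\mintp\top\colon B(H)\mintp\longopp{B(K)}\to B(H)\mintp B(K)$ and its inverse are completely contractive, so this map is a complete isometry. Put $\Lambda\defeq\theta\circ(\operatorname{id}_{B(H)}\mintp\top)$; it is a complete isometry as a composite of two, and $\Lambda(a\tp b)(c)=\theta(a\tp b^\top)(c)=ac(b^\top)^\top=acb$ for all $a\in B(H)$, $b\in B(K)$, $c\in S_2(K,H)$, as required.

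The only step demanding genuine care is the identity $\theta_0(a\tp b)=u^*(a\tp b)u$ in (i); everything else is standard. Conceptually, the transpose in the definition of $\theta_0$ is exactly what cancels the conjugate-linear twist inherent in the natural identification of $S_2(K,H)$ with a Hilbertian tensor product, so that $\theta_0$ becomes, after this identification, the plain ampliation $a\tp b\mapsto a\tp b$. (Equivalently, one may observe that $a\mapsto(c\mapsto ac)$ and $b\mapsto(c\mapsto cb^\top)$ are commuting unital $*$-representations of $B(H)$ and $B(K)$ on $S_2(K,H)$, so that $\theta_0$ is a $*$-homomorphism on $B(H)\tp B(K)$; checking that its extension to $B(H)\mintp B(K)$ is faithful, hence completely isometric, comes down to the same computation.)
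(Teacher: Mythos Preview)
Your proof is correct and follows essentially the same route as the paper's (sketched) argument: identify $S_2(K,H)$ unitarily with $H\otimes_2 K$ so that $\theta_0$ becomes the canonical embedding of $B(H)\tp B(K)$ into $B(H\otimes_2 K)$, then deduce (ii) from (i) by precomposing with $\iota\mintp\top$. The paper leaves the identification implicit and defers to references, whereas you have written out the unitary $u$ and verified $u\,\theta_0(a\tp b)=(a\tp b)u$ explicitly; part (ii) is handled identically in both.
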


\begin{proof}[Sketch of the proof]
Part (i) can be proved by representing a given Hilbert-Schmidt operator as an element of the Hilbert-space tensor product $H\otimes_2 \Kbar$, and checking the required $*$-homomorphism property by direct calculations;
 see also Proposition 2.9.1 in \cite{Pis_OSbook}, or the calculations in Section 3.5 of~\cite{ER_OSbook}.
Part (ii) follows from part (i) by composing $\theta$ with the completely isometric isomorphism
 $\iota \tp \top: B(H) \mintp \longopp{B(K)} \to B(H) \mintp B(\overline{K})$.
\end{proof}

Instead of the ``concrete'' definition of the matrix norms on the projective tensor product of operator spaces, we prefer to use its characteristic universal property: it linearizes those bilinear maps $h:E\times F \to G$ which are ``completely bounded'' in the following sense
\[ \sup_{m,n\geq 1} \left\{ \norm{ h(x_{ij}, y_{pq}) }_{M_{mn}(E\tp F)} \colon [x_{ij}]\in {\sf ball}_1M_m(E)\;,\; [y_{pq}]\in {\sf ball}_1M_n(F) \right\} < \infty. \]
(Here, our terminology is that of \cite{ER_OSbook}; the reader should beware that often such maps are instead called ``jointly completely bounded'', and that the term ``completely bounded'' is then used for what \cite{ER_OSbook} call ``multiplicatively bounded''.) \comment{The CMB1991 paper of E+R calls the (j)cb maps ``matricially bounded''.}
Denoting the space of such maps by $\CB_{\rm bil}(E\times F; G)$, there are natural and completely isometric identifications
\[  \CB(E\ptp F, G) \cong_1 \CB_{\rm bil}(E\times F;G) \cong_1 \CB( E, \CB(F,G)). \]
(See \cite[Proposition 7.1.2]{ER_OSbook}.)
 We note, for future reference, that there is a natural and completely isometric identification of $\til{E}\ptp \til{F}$ with $\longopp{(E\ptp F)}$.

\begin{rem}\label{r:cc-actions}
If $H$ is a Hilbert space equipped with column o.s.s., the natural left action of $B(H)$ on $H$ defines a completely contractive bilinear map $B(H) \times H \to H$. This can be verified directly, but it is more illuminating to see it as a special case of the following general fact: given operator spaces $E$, $F$ and $G$, the composition operation
\[ \CB(F,G) \times \CB(E,F) \to \CB(E,G) \]
is completely contractive as a bilinear map. 
 (To deduce the original statement, take $E=\Cplx$ and $F=G=\COL_H$.) Similarly,
the adjoint action of $B(H)$ on $H^*$ defines a completely contractive bilinear map $H^*\times B(H) \to H^*$,
given by $(\phi,b) \mapsto b^\#\phi$.
 (Take $E=F=\COL_H$ and $G=\Cplx$.) Note that here $H^*$ is equipped with the dual o.s.s.\ to $\COL_H$, in other words it is viewed as a \emph{row} Hilbert space.
\end{rem}

We also need a standard lemma on ``interchanging tensor products'': see \cite[Theorem 8.1.10]{ER_OSbook} for a proof.

\begin{lem}\label{l:tp-interchange}
Let $E$, $F$ and $G$ be operator spaces. There are complete contractions
\[ E\ptp (F\mintp G) \to (E\ptp F) \mintp G \quad,\quad (E\mintp F)\ptp G \to E \mintp (F\ptp G) , \]
both of which are the identity map when restricted to elementary tensors.
\end{lem}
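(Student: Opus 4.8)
The plan is to build the first of the two maps directly from the universal property of $\ptp$ recorded above, and then to obtain the second for free by symmetry. Both $\mintp$ and $\ptp$ are commutative up to complete isometry (so that $E\mintp F\cong_1 F\mintp E$ and $F\ptp G\cong_1 G\ptp F$), and hence applying the flip maps to $(E\mintp F)\ptp G \cong_1 G\ptp(F\mintp E)$, feeding the result into the first contraction with the roles of the three spaces permuted, and then flipping back, produces a complete contraction $(E\mintp F)\ptp G\to E\mintp(F\ptp G)$ that is the identity on elementary tensors. So I would concentrate on the map $\Phi:E\ptp(F\mintp G)\to (E\ptp F)\mintp G$.

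By the adjunction $\CB(E\ptp Z,Y)\cong_1\CB(E,\CB(Z,Y))$ with $Z=F\mintp G$ and $Y=(E\ptp F)\mintp G$, giving a complete contraction $\Phi$ is equivalent to giving a complete contraction $\phi:E\to\CB(F\mintp G,(E\ptp F)\mintp G)$, and for $\Phi$ to restrict to the identity on elementary tensors it suffices that $\phi(x)$ send $f\tp g$ to $(x\tp f)\tp g$. I would produce $\phi$ as a composite of two complete contractions. The first is $E\to\CB(F,E\ptp F)$, $x\mapsto L_x$, where $L_x(f)=x\tp f$; this is a complete contraction because under the identification $\CB(E,\CB(F,E\ptp F))\cong_1\CB(E\ptp F,E\ptp F)$ the assignment $x\mapsto L_x$ corresponds precisely to $\mathrm{id}_{E\ptp F}$, which has cb-norm $1$. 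The second is the amplification $\CB(F,E\ptp F)\to\CB(F\mintp G,(E\ptp F)\mintp G)$, $T\mapsto T\mintp\mathrm{id}_G$. Composing and unwinding gives $\phi(x)=L_x\mintp\mathrm{id}_G$, which sends $f\tp g$ to $(x\tp f)\tp g$ exactly as required.

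The step that needs the most care --- and the only place where the injective tensor product really enters --- is checking that the amplification $T\mapsto T\mintp\mathrm{id}_G$ is not merely contractive but \emph{completely} contractive. Here I would invoke the standard complete isometry $M_n(X)\cong_1\Mat_n\mintp X$: a matrix $[T_{ij}]\in\mathsf{ball}_1 M_n(\CB(F,E\ptp F))$ corresponds to a single complete contraction $S:F\to M_n(E\ptp F)$, the amplified matrix $[T_{ij}\mintp\mathrm{id}_G]$ corresponds to $S\mintp\mathrm{id}_G:F\mintp G\to M_n(E\ptp F)\mintp G\cong_1 M_n\bigl((E\ptp F)\mintp G\bigr)$, and $\norm{S\mintp\mathrm{id}_G}_{cb}=\norm{S}_{cb}\,\norm{\mathrm{id}_G}_{cb}\le 1$ by the multiplicativity of the cb-norm under $\mintp$. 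With both maps shown to be complete contractions, their composite $\phi$ is a complete contraction, and the adjunction then delivers the desired $\Phi$; the second contraction follows by the symmetry argument above. All of this is, of course, the content of \cite[Theorem 8.1.10]{ER_OSbook}.
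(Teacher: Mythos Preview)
Your proof is correct. Note, however, that the paper itself does not give a proof of this lemma: it simply cites \cite[Theorem 8.1.10]{ER_OSbook} and moves on. Your argument --- reducing to one of the two maps by the flip symmetry of $\ptp$ and $\mintp$, then using the adjunction $\CB(E\ptp Z,Y)\cong_1\CB(E,\CB(Z,Y))$ to reduce to checking that the amplification $T\mapsto T\mintp\mathrm{id}_G$ is completely contractive --- is exactly the standard route, and indeed is essentially what one finds in the cited reference. So there is nothing to compare: you have supplied the proof that the paper outsources.
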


\end{section}

\begin{section}{The main technical result}\label{s:mainresults}

We will deduce the main result (Theorem~\ref{t:mainthm}) from the special case $V=B(H)$, $W=B(K)$. It is convenient to reformulate this special case slightly, using Lemma~\ref{l:HS-trick}.

\begin{thm}\label{t:technical}
Let $H$ and $K$ be Hilbert spaces.
Then the linear map $\Phi_0: B(H) \tp B(K) \to B(S_2(K,H))$ that is defined by $\Phi_0(a\tp b)(c) = acb$
extends to a contractive linear map $\Phi_2: B(H)\ptp B(K) \to B(S_2(K,H))$.
\end{thm}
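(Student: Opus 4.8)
The plan is to bound the operator norm of $\Phi_0(u)$ on $S_2(K,H)$ by complex interpolation. Write $u=\sum_{i=1}^n a_i\tp b_i$. The key point is that the formula $\Phi_0(u)(c)=\sum_i a_ic b_i$ equally defines bounded operators on the compact operators $S_\infty(K,H)$ and on the trace class $S_1(K,H)$, and that $S_2(K,H)=[\,S_\infty(K,H),S_1(K,H)\,]_{1/2}$ isometrically; so it suffices to estimate $\Phi_0(u)$ at these two endpoints and interpolate.

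First I would record the endpoint estimates. For $c\in S_\infty(K,H)$, the factorisation $\sum_i a_ic b_i=[a_i]_{\mathrm{row}}\cdot\mathrm{diag}(c,\dots,c)\cdot[b_i]_{\mathrm{col}}$, with $[a_i]_{\mathrm{row}}\in M_{1,n}(B(H))$ and $[b_i]_{\mathrm{col}}\in M_{n,1}(B(K))$, gives $\norm{\Phi_0(u)(c)}\le\Norm{[a_i]_{\mathrm{row}}}\,\norm{c}\,\Norm{[b_i]_{\mathrm{col}}}$, so after taking the infimum over representations of $u$, $\norm{\Phi_0(u)\colon S_\infty(K,H)\to S_\infty(K,H)}\le\norm{u}_{B(H)\htp B(K)}$. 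For the trace-class endpoint I would first treat a rank-one $c=\xi\eta^*$ with $\xi\in H$, $\eta\in K$ unit vectors: then $\sum_i a_ic b_i=\sum_i (a_i\xi)(b_i^*\eta)^*$ factors through $\Cplx^n$, so by H\"older for Schatten classes its $S_1$-norm is at most $\bigl(\sum_i\norm{a_i\xi}^2\bigr)^{1/2}\bigl(\sum_i\norm{b_i^*\eta}^2\bigr)^{1/2}\le\Norm{[a_i]_{\mathrm{col}}}\,\Norm{[b_i]_{\mathrm{row}}}$. Applying this termwise to a Schmidt decomposition of a general $c\in S_1(K,H)$ and summing, then taking the infimum over representations of $u$, gives $\norm{\Phi_0(u)\colon S_1(K,H)\to S_1(K,H)}\le\norm{\sigma(u)}_{B(K)\htp B(H)}$, where $\sigma\colon a\tp b\mapsto b\tp a$ is the flip; note that the \emph{order} of the tensor factors in the Haagerup norm has been reversed at this endpoint.

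I would then apply the three-lines theorem to the (constant) operator-valued function $z\mapsto\Phi_0(u)$ on the couple $(S_\infty(K,H),S_1(K,H))$, obtaining
\[ \Norm{\Phi_0(u)\colon S_2(K,H)\to S_2(K,H)}\le\norm{u}_{B(H)\htp B(K)}^{1/2}\,\norm{\sigma(u)}_{B(K)\htp B(H)}^{1/2}. \]
To finish I would invoke two standard facts: the Haagerup tensor norm is dominated by the projective one, so $\norm{u}_{B(H)\htp B(K)}\le\norm{u}_{B(H)\ptp B(K)}$; and the flip is an isometric isomorphism $B(H)\ptp B(K)\to B(K)\ptp B(H)$, so $\norm{\sigma(u)}_{B(K)\htp B(H)}\le\norm{\sigma(u)}_{B(K)\ptp B(H)}=\norm{u}_{B(H)\ptp B(K)}$. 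Combining these, $\norm{\Phi_0(u)\colon S_2(K,H)\to S_2(K,H)}\le\norm{u}_{B(H)\ptp B(K)}$ for every $u\in B(H)\tp B(K)$, so $\Phi_0$ extends to the required contraction $\Phi_2\colon B(H)\ptp B(K)\to B(S_2(K,H))$.

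The main obstacle here is conceptual rather than computational: one must notice that the two endpoint estimates naturally produce the two \emph{different} (row--column versus column--row) Haagerup tensor norms, so neither endpoint by itself is controlled by $\norm{u}_{B(H)\ptp B(K)}$ in a usable way --- but the \emph{geometric mean} of them is, precisely because $\ptp$ is symmetric whereas $\htp$ is not, which is exactly why the flipped element $\sigma(u)$ has to be fed into the second Haagerup norm. The step requiring the most care is the trace-class endpoint bound, where the reduction to rank-one operators, the H\"older estimate for $\Norm{\sum_i (a_i\xi)(b_i^*\eta)^*}$ in $S_1$, and the reassembly over a Schmidt decomposition of a general $c$ must all be handled correctly.
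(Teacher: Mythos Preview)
Your proof is correct, and it shares the same overall skeleton as the paper's argument: fix $u\in B(H)\tp B(K)$, bound $\Phi_0(u)$ as an operator on $S_1(K,H)$ and on $S_\infty(K,H)$ separately, then interpolate using $(S_1,S_\infty)_{1/2}\cong S_2$. The difference lies in how the two endpoint bounds are obtained.

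The paper bounds each endpoint directly by the projective norm. It identifies $S_1(K,H)\cong H\ptp K^*$ and $S_\infty(K,H)\cong H\mintp K^*$ with $H,K$ carrying column o.s.s., and then observes that the trilinear map $(a,c,b)\mapsto acb$ is completely contractive $B(H)\times S_j(K,H)\times B(K)\to S_j(K,H)$ for $j=1,\infty$, using the completely contractive module actions $B(H)\ptp H\to H$ and $K^*\ptp B(K)\to K^*$ together with the tensor-interchange lemma (Lemma~\ref{l:tp-interchange}). Currying then gives $\norm{\Phi_0(u)\colon S_j\to S_j}\le\norm{u}_{B(H)\ptp B(K)}$ for each endpoint separately, so the interpolation step is almost cosmetic.

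Your route is more elementary: you produce the two asymmetric Haagerup-norm estimates $\norm{\Phi_0(u)}_{B(S_\infty)}\le\norm{u}_h$ and $\norm{\Phi_0(u)}_{B(S_1)}\le\norm{\sigma(u)}_h$ by direct matrix factorisations and a rank-one/Schmidt-decomposition argument, and only after interpolating do you pass to the projective norm via $\norm{\cdot}_h\le\norm{\cdot}_{\ptp}$ and the symmetry of~$\ptp$. This avoids the operator-space module machinery and Lemma~\ref{l:tp-interchange} entirely, and en route yields the sharper intermediate inequality $\norm{\Phi_0(u)}_{B(S_2)}\le\norm{u}_h^{1/2}\norm{\sigma(u)}_h^{1/2}$. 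One small caveat about your closing commentary: your remark that ``neither endpoint by itself is controlled by $\norm{u}_{B(H)\ptp B(K)}$ in a usable way'' is a feature of your particular estimates, not of the problem --- the paper's approach shows that each endpoint \emph{is} so controlled once one invokes the completely contractive module actions.
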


\begin{proof}
Write $S_0(K,H)$ for the space of finite-rank operators $K\to H$, $S_1(K,H)$ for the space of nuclear operators $K\to H$, and $S_\infty(K,H)$ for the space of all compact operators $K\to H$. Then we have three linear maps
\[ \Psi_j : B(H) \tp S_j(K,H) \tp B(K) \to S_j(K,H) \qquad(j=0,1,\infty), \]
each of which is defined on elementary tensors by $a\tp c\tp b \mapsto acb$.
The key point is that if we equip $S_1(K,H)$ and $S_\infty(K,H)$ with appropriate o.s.s., then we can extend both $\Psi_1$ and $\Psi_\infty$ to completely contractive linear maps on the threefold projective tensor product.

The details are as follows. Equip $H$ and $K$ with column o.s.s., and consider the two operator spaces $H\ptp K^*$ and $H\mintp K^*$.
The usual identification of $H\tp K^*$ with $S_0(K,H)$ extends to give two isometric isomorphisms of Banach spaces $H\ptp K^* \cong_1 S_1(K,H)$ and $H\mintp K^*\cong_1 S_\infty(K,H)$. (See e.g.~\cite{ER_OSbook}, Proposition~8.2.1.)

So we can identify $\Psi_0$, $\Psi_1$ and $\Psi_\infty$ with the following linear maps:
\[ \begin{array}{rccl}
\Psi_0: & B(H) \tp [ H\tp K^*] \tp B(K) & \to &H\tp K^*\\
\Psi_1: & B(H) \tp [ H\ptp K^*] \tp B(K) & \to &H\ptp K^*\\
\Psi_\infty :& B(H) \tp [ H\mintp K^*] \tp B(K) & \to & H\mintp K^*
\end{array}
\]
On elementary tensors, these maps satisfy
\[ \Psi_j(a\tp (v \tp \phi) \tp b) = av \tp b^\#\phi \qquad\text{($a\in B(H)$, $v\in H$, $\phi\in K^*$, $b\in B(K)$)} \]
for $j=0,1,\infty$. (Recall that $b^\# : K^* \to K^*$ is the adjoint in the Banach space sense.)

\eject
As noted in Remark~\ref{r:cc-actions}, there are completely contractive maps 
 $\lambda: B(H)\ptp H \to H$ and $\rho:K^*\ptp B(K)\to K$, satisfying
$\lambda(a\tp v)  =av$ ($a\in B(H)$, $v\in H$)
and
$\rho(\phi\tp b)  = b^\#\phi$ ($b\in B(K)$, $\phi\in K^*$).
Hence, by associativity of $\ptp$, we see that $\lambda \ptp \rho$ defines a complete contraction
\begin{equation}\label{eq:extend Psi_1}
 B(H) \ptp H\ptp K^*\ptp B(K) \to H\ptp K^*
\end{equation}
which extends $\Psi_1$. Furthermore, by using associativity of $\ptp$, and using Lemma~\ref{l:tp-interchange} twice, we have complete contractions
\[ \begin{aligned}
 B(H) \ptp [ H\mintp K^*] \ptp B(K) 
 &\cong_1 \left( B(H) \ptp [ H\mintp K^*] \right)\ptp B(K) \\
 & \to \left( [B(H) \ptp  H]\mintp K^* \right)\ptp B(K) \\
 & \to [ B(H) \ptp  H ] \mintp [K^* \ptp B(K) ].
\end{aligned} \]
Composing these maps with $\lambda\mintp \rho$, we obtain a complete contraction
\begin{equation}\label{eq:extend Psi_2}
 B(H) \ptp [ H\mintp K^*] \ptp B(K) \to H\mintp K^*
\end{equation}
which extends $\Psi_\infty$.

Fix $x\in B(H)\tp B(K)$ such that $\norm{x}_{B(H)\ptp B(K)}\leq 1$, and consider the corresponding linear map $\Phi_0(x): S_0(K,H)\to S_0(K,H)$.
 In view of \eqref{eq:extend Psi_1} and \eqref{eq:extend Psi_2}, we obtain contractive linear maps
\[ \Phi_1(x)  : S_1(K,H) \to S_1(K,H)\quad,\quad \Phi_\infty(x):S_\infty(K,H) \to S_\infty(K,H) , \]
which both extend the map $\Phi_0(x)$.

Viewing $(S_1(K,H), S_\infty(K,H))$ as a compatible interpolation couple of Banach spaces, we have $(S_1(K,H),S_\infty(K,H))_{1/2}\cong_1 S_2(K,H)$. By the Riesz--Thorin interpolation theorem, $\Phi_0(x)$ extends to a contractive linear map $\Phi_2(x): S_2(K,H)\to S_2(K,H)$.
It is now routine to check that $\Phi_2$ defines a linear contraction $B(H)\ptp B(K) \to B(S_2(K,H))$, which completes the proof of Theorem~\ref{t:technical}.
\end{proof}

\begin{proof}[Proof of Theorem~\ref{t:mainthm}]
Let $V$ and $W$ be operator spaces, and fix two completely isometric embeddings $j_V: V \hookrightarrow B(H)$ and $j_W:W\hookrightarrow B(K)$ for some choices of Hilbert spaces $H$ and~ $K$.
Consider the diagram \comment{Try to redo this without diagrams.sty?}
\[ \begin{diagram}[tight,width=7em,height=2.5em]
B(H) \ptp B(K) & \rTo^{\Psi_{B(H),B(K)}}  & B(H)\mintp \longopp{B(K)}  \\
 \uTo^{j_V\ptp j_W} & & \uTo_{j_V\mintp \til{j_W}} \\
V \ptp W  & \rDots & V\mintp \til{W}
\end{diagram} \]
where the top arrow $\Psi_{B(H),B(K)}$ restricts to the identity map on elementary tensors; note that $\Psi_{B(H),B(K)}$ is well-defined and contractive by combining Theorem~\ref{t:technical} with Lemma~\ref{l:HS-trick}.
Now observe that the left-hand vertical arrow in the diagram is a (complete) contraction, while the right-hand vertical arrow is a (complete) \emph{isometry} (using the ``injective'' property of $\mintp$). Hence, for any elementary tensor $x\in V\tp W$, we have
\[ \begin{aligned}
\norm{ x}_{V\mintp \til{W}}
& = \norm{(j_V\mintp \til{j_W})(x)}_{B(H)\mintp\longopp{B(K)}} \\
& \leq \norm{\Psi_{B(H),B(K)}}\norm{(j_V\ptp j_W(x)}_{B(H)\ptp B(K)} \leq \norm{x}_{V\ptp W}\;,
\end{aligned}
\]
which completes the proof.
\end{proof}


We finish by briefly justifying the claim, made after the statement of Theorem~\ref{t:mainthm}, that one cannot replace the projective tensor product with the Haagerup tensor product in that theorem. The result is probably well known but we include a proof for sake of completeness.

\begin{prop}\label{p:not-htp}
There exist $\Cst$-algebras $A$ and $B$ such that the identity map $A\tp B \to A\tp B$ does not extend to any bounded linear map $A\htp B \to A\mintp \til{B}$.
\end{prop}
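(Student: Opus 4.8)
The plan is to take $A=B=K(\ell_2)$, the compact operators on a separable infinite-dimensional Hilbert space, and to produce a sequence $(z_n)$ in the algebraic tensor product whose injective norm grows while its norm in $A\htp\til B$ stays bounded. Writing $(e_{ij})_{i,j\ge1}$ for the standard matrix units of $K(\ell_2)$, I would take
\[ z_n \defeq \sum_{k=1}^{n} e_{k1}\tp e_{k1} \in K(\ell_2)\tp K(\ell_2) \qquad(n\ge1), \]
and prove that $\Norm{z_n}_{A\mintp B}=\sqrt n$ while $\Norm{z_n}_{A\htp\til B}\le1$. With these in hand, a bounded $T\colon A\htp\til B\to A\mintp B$ restricting to the identity on $A\tp B$ would satisfy $\sqrt n=\Norm{z_n}_{A\mintp B}=\Norm{Tz_n}_{A\mintp B}\le\Norm{T}\,\Norm{z_n}_{A\htp\til B}\le\Norm{T}$ for all $n$, which is absurd; that proves the proposition.

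For the injective estimate I would use the spatial description $K(\ell_2)\mintp K(\ell_2)=K(\ell_2\tp\ell_2)$, so that $\Norm{z_n}_{A\mintp B}$ is just the operator norm of $z_n$ on $\ell_2\tp\ell_2$. A short computation with matrix units gives $z_nz_n^{*}=\sum_{k,l\le n}e_{kl}\tp e_{kl}$, which one recognises as the rank-one operator $\xi\mapsto\ip{\xi}{\Omega_n}\Omega_n$ with $\Omega_n\defeq\sum_{k=1}^{n}\delta_k\tp\delta_k$; hence $\Norm{z_n}_{A\mintp B}^{2}=\Norm{z_nz_n^{*}}=\Norm{\Omega_n}^{2}=n$. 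For the Haagerup estimate I would use the given representation of $z_n$ as the matrix product of the row $u=[e_{11}\ e_{21}\ \cdots\ e_{n1}]$ with the column $v=[e_{11};e_{21};\cdots;e_{n1}]$, so that by definition of the Haagerup norm $\Norm{z_n}_{A\htp\til B}\le\Norm{u}_{M_{1,n}(A)}\,\Norm{v}_{M_{n,1}(\til B)}$. Here $\Norm{u}_{M_{1,n}(A)}=\Norm{\sum_{k\le n}e_{k1}e_{k1}^{*}}^{1/2}=\Norm{\sum_{k\le n}e_{kk}}^{1/2}=1$; and --- this is the point --- by the very definition of the opposite operator space structure, the column $v$ has the same norm in $M_{n,1}(\til B)$ as the \emph{row} $[e_{11}\ \cdots\ e_{n1}]$ has in $M_{1,n}(B)$, namely $1$ again, since passing from $B$ to $\til B$ replaces the ``column'' expression $\sum_k b_k^{*}b_k$ by the ``row'' expression $\sum_k b_kb_k^{*}$. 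Hence $\Norm{z_n}_{A\htp\til B}\le1$, as required.

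The one step that I expect to demand care is the Haagerup estimate, precisely because the substitution $B\rightsquigarrow\til B$ is exactly what turns the ``wrong'' quantity $\sum_k b_k^{*}b_k$ into the ``right'' quantity $\sum_k b_kb_k^{*}$, and it is this change that keeps $\Norm{z_n}_{A\htp\til B}$ bounded even though $\Norm{z_n}_{A\mintp B}\to\infty$. The remaining ingredients --- that a rank-$n$ corner of $K(\ell_2)$ is completely isometrically $\Mat_n$ (so that the matrix units $e_{ij}$ behave as expected), and that $\mintp$ is the spatial tensor product --- are entirely standard.
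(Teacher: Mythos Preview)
Your proof is correct and follows essentially the same strategy as the paper's: exhibit a sequence in the algebraic tensor product whose Haagerup norm in $A\htp\til B$ stays bounded (because the passage $B\rightsquigarrow\til B$ swaps the column expression $\sum b_k^{*}b_k$ for the row expression $\sum b_kb_k^{*}$) while the injective norm grows like $\sqrt{n}$. The only differences are cosmetic: the paper takes $A=B=B(K)$ and uses Cuntz isometries $s_j$ (with $s_j^{*}s_k=\delta_{jk}I$), first reducing via the transpose map to a statement about $B(K)\htp B(K)\to B(K)\mintp B(K)$, whereas you work directly in $K(\ell_2)\htp\longopp{K(\ell_2)}$ with the matrix units $e_{k1}$ --- a slightly more elementary choice that avoids the preliminary reduction.
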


\begin{proof}
For convenience we take $A=B(K)$ and $B=B(\Kbar)$ for an infinite-dimensional Hilbert space~$K$: it will be seen that we actually get separable counterexamples in the end.
Recall that $b\mapsto b^\top$ is a complete isometry
from $B(K)$ onto $\longopp{B(\Kbar)}$. Hence it suffices to show that the map ${\rm id}\tp\top : B(K)\tp B(\Kbar) \to B(K)\tp B(K)$ has no bounded extension to a map $B(K)\htp B(\Kbar) \to B(K)\mintp B(K)$.

Fix an infinite sequence of isometries $s_1,s_2,\dots$ in $B(K)$ with the property that the range projections $s_js_j^*$ are pairwise orthogonal. In particular, $s_j^*s_k=0$ for $j\neq k$ and $P_n\defeq\sum_{j=1}^n s_js_j^*$ is an orthogonal projection for each $n$.

 Let $x_n = \sum_{j=1}^n s_j\tp s_j^\top \in B(K) \tp B(\Kbar)$: then using the standard formula for the Haagerup tensor norm (see \cite[Chapter 5]{Pis_OSbook}),
\[ \norm{x_n}_{B(K)\htp B(\Kbar)} \leq \Norm{ \sum\nolimits_{j=1}^n s_js_j^* }^{1/2} \Norm{ \sum\nolimits_{k=1}^n (s_k^\top)^*s_k^\top }^{1/2} = \norm{P_n}^{1/2} \norm{P_n^\top}_{1/2} \leq 1 \;;\]
On the other hand, let $y_n = ({\rm id}\tp \top)(x_n) = \sum_{j=1}^n s_j \tp s_j \in B(K)\mintp B(K)$. By the $\Cst$-identity $\norm{y_n}  = \norm{y_n^*y_n}^{1/2}$, with both norms taken in $B(K)\mintp B(K)$. But since the $s_j$ are isometries with pairwise orthogonal ranges,
\[ y_n^*y_n = \sum\nolimits_{j,k=1}^n s_j^*s_k\tp s_j^*s_k = n I \tp I \;. \]
Hence $\norm{ ({\rm id}\tp \top)(x_n)}_{B(K)\mintp B(K)} \geq n^{1/2}  \norm{x_n}_{B(K)\htp B(\Kbar)}$, and since $n$ is arbitrary the result follows.
\end{proof}

\begin{rem}
Since $\htp$ and $\mintp$ are both injective tensor norms, the proof of Proposition~\ref{p:not-htp} shows that we could take $A$ to be the $\Cst$-algebra generated by the isometries $(s_j)_{j\geq 1}$, i.e.~the Cuntz algebra $\cO_\infty$, and $B$ to be the corresponding copy of $\cO_\infty$ inside $B(\Kbar)$. In particular, we get nuclear examples.
\end{rem}

\end{section}

\subsection*{Acknowledgements}
The author thanks Matthew Daws and Adam Skalski for various helpful exchanges on operator-space tensor products and conjugate Hilbert spaces, and for their comments and corrections on an earlier draft of this note.

He also wishes to acknowledge the admirable if idiosyncratic examples set over the years by John Rainwater (see  \cite{Rain-bio} for further details) who might, in times gone by, have taken credit for this note.


\newpage

\bibliography{twistostp}
\bibliographystyle{siam}

\bigskip
\noindent
Department of Mathematics and Statistics\\
Fylde College, Lancaster University\\
Lancaster, United Kingdom LA1 4YF
\hfill
Email: \texttt{y.choi1@lancaster.ac.uk}

\end{document}